\newcommand{\R}{\mathbb{R}}
\newtheorem{fact}{Theorem}[section]
\newtheorem{obs}[fact]{Observation}
\newtheorem{defn}[fact]{Definition}
\newtheorem{remark}[fact]{Remark}
\title{$x$ Plays Pok\'emon, for almost-every $x$.}
\date{16 Jun 2025}
\begin{document}

\maketitle
\begin{center}
\author{C. Evans Hedges}
\end{center}

\begin{abstract} This paper provides a brief write-up showing that for any finite state game, a disjunctive number $x$ will eventually win that game. The proof techniques here are well known and this result follows immediately from folklore results in automata theory. This short paper primarily serves as an expositional piece to collect this proof with the fun context of $\pi$ Plays Pok\'emon serving as motivation. 
\end{abstract}

\section{Introduction}

In October 2021, the Twitch channel {\it WinningSequence} began streaming $\pi$ plays Pok\'emon, where each digit $0-9$ was mapped to a GameBoy Advance button, and $\pi$ began playing Pok\'emon Sapphire at one digit per second. As of writing, {\it WinningSequence} has played for just over $\pi$ years, has pushed over $100$ million buttons, and despite having never left the starting area has a level 90 Sceptile. This naturally leads us to the question: will $\pi$ eventually win Pok\'emon? 

Although $\pi$ is suspected of being a normal number \cite{borel1909}, we do not know if this is the case and unfortunately because of our lack of knowledge of the properties of $\pi$, we are unable to answer this question at present. However, we are able to show that for any disjunctive number $x$ \cite{calude-zamfirescu1998} (which is Lebesgue almost-every number \cite{borel1909}), $x$ will eventually win Pok\'emon. The theorem is stated for any finite state game with finitely many actions that can be taken at any point in time. It also extends to games with determinsitic pseudorandom number generators and holds with probability $1$ for games with true random number generators. This result follows from a well known result in automata theory \cite{cerny1964,volkov} and this paper aims to be a fun introduction to how graph and automata theory can be used to model games that we might find interesting.

\section{Background}

In order to model these finite state games (of which all computer games must necessarily be due to memory constraints) we start with a collection $V$ of possible game states. If the game additionally uses a pseudo-random number generator (pRNG) from a single generated seed, we may also include in $V$ the current state of the pRNG. Thus, given a state of the game $v \in V$, for a given button push $b$, $v$ will deterministically transition to some $v' \in V$. We then say there is a directed edge from $v$ to $v'$ with a label of $b$. Let $\mathcal{E}$ denote the collection of all edges for a given game. We then say the game $G$ is the graph $G = (V, \mathcal{E})$. In this sense, a game contains information about the state of the game, as well as the possible transitions between states. 

Note that this kind of game is deterministic since we have embedded the current state of the pRNG into $V$. If we did not, we would have to include either multiple transitions with the same label combined with the output of the (p)RNG. For this paper, however, we stick to the simplified case where $V$ includes the state of the pRNG and our game state transitions can be thought to be deterministic. We note here that this is the case for the classic Pok\'emon games, including Pok\'emon Sapphire, which uses a Linear Congruential Generator (LCG) \cite{knuth1997} with a period of $2^{32}$. Thus, the state space $V$ is effectively the product of the game's memory states and the $2^{32}$ possible states of the LCG.

We also say that a path in the graph $G$ is a sequence of edges $e_1 e_2 \dots e_k$ that traverse a sequence of vertices $v_0 v_1\dots v_{k}$ where the edge $e_i$ is an edge from $v_{i-1}$ to $v_i$. For a given graph $G$ as described above, a word $w$ is called synchronizing \cite{cerny1964} if for any two starting vertices $v_1, v_2 \in V$, if we traverse the path of edges generated by $w$ (i.e., the edges that are labeled according to the labels from $w$), the resulting vertex is the same. I.e. $v_1 \mapsto_{w} v$ and $v_2 \mapsto_{w} v$.

Finally, a vertex $v \in V$ is called a sink state if for every edge $e \in \mathcal{E}$ that leaves $v$, $e$ maps $v$ to $v$. This means that once you have reached $v$, all future states of the game will be $v$ and the game is effectively stuck. In the context of gaming, these are often referred to as "hard-locks." For the remainder of this paper we will assume that $G$ has a singular sink state $\tilde{v}$ such that for any $v \in V$ there exists a path from $v$ to $\tilde{v}$. In terms of the game, we want $\tilde{v}$ to denote that you have won the game, and requiring that any $v \in V$ has a path to $\tilde{v}$ means that no matter what the current state of the game, it is possible to still win the game. This means that we are assuming the game is "win-reducible"—you cannot hard-lock yourself in a state where winning becomes impossible. 

Finally, we consider how the button pushes are generated to play the game. For our given game $G$, we have some finite number of actions $b$ that are possible. We denote these actions as symbols from the alphabet $\mathcal{B} = \{0, \dots, b-1\}$. Thus, we can take any real number $x \in \R$ and examine the base-$b$ expansion of $x$. An important concept for this paper is that of a disjunctive number. 

\begin{defn}
We say $x \in \R$ is disjunctive in base $b$ if for any finite word $w \in \{0, \dots, b-1\}^n$, the word $w$ appears in the base-$b$ expansion of $x$. We say $x$ is disjunctive if it is disjunctive for every base $b \geq 2$.
\end{defn}
Note here that Lebesgue almost every real is a normal number \cite{borel1909}, which is a much stronger property that requires every finite word to occur {\it with the appropriate frequency}. We do not define this formally here since it is not required, but we simply note that almost every number is disjunctive \cite{calude-zamfirescu1998}.

\section{Statement of Theorems}

\begin{fact}\label{graph fact} Let $G$ be a game (graph) as described above. Then there exists a synchronizing word $w$ \cite{cerny1964} that maps all vertices to $\tilde{v}$, bounded above in length by $|G|^2$ \cite{pin1983}. 
\end{fact}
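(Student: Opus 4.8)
The plan is to track the \emph{image} of the entire vertex set under successive words (the subset/power-automaton viewpoint) and to exploit the two defining properties of the sink—that $\tilde{v}$ is reachable from every vertex and that $\tilde{v}$ is absorbing. Write $\delta(v, w)$ for the vertex reached from $v$ by reading the label-sequence $w$, and extend this to sets by $\delta(S, w) = \{\delta(v,w) : v \in S\}$. Because every vertex has exactly one out-edge for each symbol of $\mathcal{B}$, the map $\delta(\cdot, w)$ is a well-defined total function for every word $w$. A word $w$ is synchronizing and carries everything to the sink exactly when $\delta(V, w) = \{\tilde{v}\}$, so the goal is to build such a $w$ of length at most $|G|^2$.

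Two preliminary observations do most of the work. First, $\tilde{v}$ lies in \emph{every} image: since $\tilde{v} \in V$ and $\tilde{v}$ is a sink, $\delta(\tilde{v}, w) = \tilde{v}$ for all $w$, so $\tilde{v} \in \delta(V, w)$ regardless of $w$. Second, every vertex can be driven to $\tilde{v}$ by a short word: by hypothesis there is a path from any $v$ to $\tilde{v}$, and a shortest such path repeats no vertex, hence uses at most $|V| - 1$ edges, so its label-word $u_v$ has length $\le |V| - 1$ with $\delta(v, u_v) = \tilde{v}$.

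I would then construct $w$ greedily, one phase at a time. Maintain the current image $S = \delta(V, w)$, starting from the empty word and $S = V$. While $|S| \ge 2$, pick any $p \in S$ with $p \ne \tilde{v}$ (one exists because $S$ contains $\tilde{v}$ together with at least one other vertex, and $\tilde{v}$ is the unique sink so every such $p$ reaches $\tilde{v}$), append the routing word $u_p$ to $w$, and update $S$ to $\delta(S, u_p)$. After this phase both $p$ and $\tilde{v}$ map to $\tilde{v}$, so two distinct members of the old image collapse and $|S|$ drops by at least one. Since $|S|$ begins at $|V|$ and must reach $1$, at most $|V| - 1$ phases occur, each contributing at most $|V| - 1$ symbols; hence $|w| \le (|V| - 1)^2 \le |G|^2$, and the resulting word satisfies $\delta(V, w) = \{\tilde{v}\}$.

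The step I expect to be the crux is verifying that each phase \emph{strictly} shrinks the image, and this is precisely where both sink properties are needed simultaneously: reachability guarantees the routing word $u_p$ exists and is short, while the absorbing property guarantees that $\tilde{v}$ is a permanent anchor of every image, so routing $p$ onto $\tilde{v}$ genuinely merges two states rather than merely permuting them. Without the absorbing, globally-reachable sink one would fall back on the general synchronization machinery—merging arbitrary pairs via the pair automaton, which yields only the cubic Černý--Frankl--Pin bound—so it is exactly this special structure that upgrades the estimate to the quadratic $|G|^2$.
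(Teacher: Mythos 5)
Your proof is correct and takes essentially the same approach as the paper's: both greedily concatenate short routing words to $\tilde{v}$, one per un-synchronized state, and use the absorbing-sink property to guarantee that earlier progress is never undone, giving the quadratic bound. Your subset-image bookkeeping (tracking $\delta(V,w)$ and arguing it strictly shrinks each phase) is a repackaging of the paper's induction over the enumerated vertices $v_1,\dots,v_N$, with a marginally sharper count of $(|V|-1)^2$ in place of the paper's $N|V| \leq |V|^2$.
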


\begin{remark} The quadratic bound $|G|^2$ is a standard result \cite{pin1983}, but it is worth noting the connection to the famous \v{C}ern\'y Conjecture \cite{cerny1964}. The conjecture posits that for any synchronizing automaton with $n$ states, there exists a synchronizing word of length at most $(n-1)^2$ \cite{volkov}. While this remains one of the most significant open problems in automata theory, the existence of \textit{some} polynomial bound is sufficient for our purposes.
\end{remark}

\begin{proof} First we enumerate the states $V = \{ \tilde{v}, v_1, \dots, v_N \}$, where $N = |V|-1$. For each $1 \leq i \leq N$, there exists a winning word $w_i$ mapping $v_i$ to $\tilde{v}$. We construct the synchronizing word $W$ inductively. 

Set $W_1 = w_1$. After applying $W_1$, we observe that $v_1$ is mapped to $\tilde{v}$. Since $\tilde{v}$ is a sink state, any further inputs will leave the game in state $\tilde{v}$. Now consider $v_2$. After the application of $W_1$, the state $v_2$ is mapped to some $v_{i_2} \in V$. Let $w_{i_2}$ be the word that maps $v_{i_2}$ to $\tilde{v}$. We then set $W_2 = W_1 w_{i_2}$. By construction, $W_2$ maps both $v_1$ and $v_2$ to $\tilde{v}$. Continuing this process for all $N$ vertices, we obtain a finite word $W = W_N$ such that for every starting vertex $v \in V$, the game state after application of $W$ is $\tilde{v}$. 

Finally, we note that each $w_i$ can be chosen to have length at most $|V|$. Thus, the length of $W$ is bounded above by $N|V| \leq |V|^2$.
\end{proof}

\begin{remark} While Theorem \ref{graph fact} guarantees the existence of a winning word, finding the shortest such word is a significantly more difficult task. It is a known result in computational complexity that finding the shortest synchronizing word for a given automaton is NP-hard \cite{eppstein1990,olschewski-ummels2010}. Thus, while $x$ will win eventually, a speedrunner (or a particularly impatient real number) would find the task of optimizing this win sequence to be computationally intractable.
\end{remark}

\begin{fact} Let $x$ be disjunctive. Then $x$ will win any deterministic game that cannot be lost.  
\end{fact}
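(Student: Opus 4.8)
The plan is to combine Theorem~\ref{graph fact} with the definition of a disjunctive number; almost no new machinery is required. By Theorem~\ref{graph fact}, there is a single finite word $W$ over the alphabet $\mathcal{B} = \{0,\dots,b-1\}$, of length at most $|V|^2$, that synchronizes $G$: applying the button presses dictated by $W$ from \emph{any} starting vertex drives the game into the sink/win state $\tilde{v}$. This word is fixed once and for all by the structure of $G$ and does not depend on $x$.

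First I would fix the initial game state $v_0 \in V$ and regard the base-$b$ expansion $x = \ldots d_1 d_2 d_3 \ldots$ as an infinite stream of button presses, which traces out a path $v_0 \mapsto v_1 \mapsto v_2 \mapsto \cdots$ in $G$. Since $x$ is disjunctive in base $b$ (the base being determined by the number of available actions), the particular finite word $W$ must occur as a contiguous block of this expansion: there is an index $k$ with $d_{k+1} d_{k+2} \cdots d_{k+|W|} = W$. Feeding the first $k$ digits leaves the game in some state $v_k$, and then the next $|W|$ digits are exactly the word $W$.

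The crucial step is the observation that I neither control nor need to control the state $v_k$ occupied just before this block begins. Because $W$ is synchronizing, applying $W$ from $v_k$ — whatever $v_k$ happens to be — lands the game in $\tilde{v}$. This is precisely why Theorem~\ref{graph fact} is stated for a word that wins from all vertices simultaneously, rather than merely a winning word from $v_0$: the intermediate state reached after $k$ arbitrary presses is unpredictable, so a word tailored to a single starting vertex would be useless here, whereas a synchronizing word succeeds regardless. Finally, since $\tilde{v}$ is a sink state, every subsequent digit of $x$ leaves the game in $\tilde{v}$, so $x$ has won and remains won forever.

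The argument is genuinely short, and the only point requiring care is matching the two properties against one another: disjunctivity guarantees that $W$ appears somewhere in the expansion but gives no control over the state from which it begins, while synchronization guarantees that $W$ wins from exactly such an uncontrolled state. Neither property alone suffices — this is the conceptual crux — but together they close the gap immediately. A minor bookkeeping remark worth including is that disjunctivity supplies $W$ in the single relevant base $b$, so the full strength of disjunctivity across all bases is not even needed.
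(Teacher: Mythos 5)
Your proposal is correct and follows exactly the paper's argument: invoke Theorem~\ref{graph fact} to obtain the synchronizing word $W$, use disjunctivity to find $W$ as a block in the base-$b$ expansion of $x$, and observe that synchronization makes the (uncontrollable) state preceding that block irrelevant, after which the sink property finishes the job. Your write-up is simply a more careful elaboration of the same proof, with the helpful added emphasis that a merely winning word from the initial state would not suffice.
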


\begin{proof} This follows immediately from the Theorem \ref{graph fact}. Let $W$ be the synchronizing word that exists by Theorem \ref{graph fact}. Since $W$ will appear in $x$ and we know that $W$ will win regardless of the current state of the game, it does not matter what $x$ does before seeing the $W$, after completing $W$ we are guaranteed to win the game. 
\end{proof}

\begin{obs} If a game has $b$ buttons to push, then we have $b^{|G|^2}$ sequences of length $|G|^2$. If we concatenate all of these, we then beat the game after a sequence of $|G|^2 b^{|G|^2}$ button pushes. 
\end{obs}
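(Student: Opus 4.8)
The plan is to show that the explicit concatenation described already contains a guaranteed winning sequence as one of its constituent blocks, and then to invoke the sink property of $\tilde{v}$ to finish. First I would apply Theorem \ref{graph fact} to obtain a synchronizing word $W$ of length at most $|G|^2$ that maps every vertex to the sink $\tilde{v}$. The only mismatch between this $W$ and the objects being enumerated is that $W$ may be strictly shorter than $|G|^2$, whereas the concatenation ranges over words of length \emph{exactly} $|G|^2$.

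To reconcile this I would pad $W$ on the right by arbitrary symbols from $\mathcal{B}$ until it has length exactly $|G|^2$, calling the result $W'$. Here is where the sink hypothesis does the work: since every edge leaving $\tilde{v}$ returns to $\tilde{v}$, appending any suffix after $W$ leaves a game already at $\tilde{v}$ fixed there. Hence $W'$ is still a synchronizing word mapping every vertex to $\tilde{v}$, and crucially $W'$ is now one of the $b^{|G|^2}$ words of length $|G|^2$ that are being concatenated.

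Next I would observe that the order of concatenation is immaterial. Because the full sequence contains $W'$ as a contiguous block, consider the moment the game begins processing that block: whatever state $v \in V$ the game occupies at that instant, the synchronizing property of $W'$ forces it to $\tilde{v}$ by the end of the block, and every subsequent block consists of button pushes applied at $\tilde{v}$, which by the sink property leave the game at $\tilde{v}$ permanently. Finally, the length count is immediate: there are exactly $b^{|G|^2}$ words of length $|G|^2$ over an alphabet of $b$ symbols, so their concatenation has total length $|G|^2 b^{|G|^2}$. I expect the padding step to be the only genuine subtlety — it is precisely what licenses replacing the length-$\le |G|^2$ word of Theorem \ref{graph fact} by a length-exactly-$|G|^2$ block — while the remainder is bookkeeping and a direct appeal to the sink property.
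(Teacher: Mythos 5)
Your proposal is correct, and it supplies precisely the reasoning the paper leaves implicit: the Observation is stated without proof, its intended justification being exactly that the synchronizing word of Theorem \ref{graph fact} occurs among (or as a prefix of) the concatenated blocks, after which the sink property of $\tilde{v}$ preserves the win. Your padding step resolving the mismatch between ``length at most $|G|^2$'' and ``length exactly $|G|^2$'' is the one genuine subtlety, and you handle it correctly via the sink property.
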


This bound is horribly bad for real world considerations, as Pok\'emon Sapphire has a save state of $1$MB, which means we can bound $|G|$ by $8\times10^6$. Since it also has $8$ buttons, we can win Pok\'emon Sapphire in 
$$(8\times 10^6)^2 \times 8^{(8\times 10^6)^2} = 2^{6} \times 10^{12} \times 2^{3 \times 2^{6} \times 10^{12}} \approx  2^{10^{14}}$$
button pushes. This is a horrible bound; this number has $\approx 30$ trillion digits, and with there being approximately $10^{80}$ atoms in the observable universe, this number is effectively $10^{30 \text{ trillion}}$ times larger than that. It is similarly large compared to the number of Planck volumes in the observable universe. This bound is horrifyingly bad and even though we can write $2^{10^{14}}$ relatively easily here, it is an insanely large number for any practical purpose.

%

As an additional fact, we note here that if we appropriately encode our game $G$ to support RNG separately, so long as the RNG will show every word with some positive probability then every disjunctive number $x$ will win $G$ with probability $1$ (with respect to the RNG). 

\begin{fact} Let $G$ be a game with RNG. Let the RNG have full support over the space of all words. If $x$ is disjunctive then $x$ will win the game $G$ with probability $1$ (with respect to the RNG). 
\end{fact}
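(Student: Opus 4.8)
The plan is to reduce the randomized setting to the deterministic one already handled in Theorem 2, by carefully tracking how the RNG interacts with the button inputs drawn from $x$.

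Let me think about what's really going on here. In the deterministic case, the state space $V$ includes the pRNG state, so a button push deterministically transitions between states. Now we're told the game has "true" RNG with full support over all words. So the setup must be: the state of the game is in some space $V$, and a single button push, together with some random draw $r$ from the RNG, deterministically sends a state $v$ to a new state. The phrase "full support over the space of all words" should mean that any finite sequence of RNG draws occurs with positive probability.

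Here's my approach.

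First I would fix the interpretation: a play of the game is governed by a button sequence (coming from the base-$b$ expansion of $x$) interleaved with an independent sequence of RNG outputs. The transition of a single step is a function $\delta(v, b, r)$ of the current state $v$, the button $b$, and the RNG output $r$. The key structural observation is that \emph{for each fixed realization of the RNG outputs}, the game becomes a deterministic finite-state game of exactly the kind handled in Theorem~\ref{graph fact} and Theorem~2. So conditioning on the RNG reduces us to the deterministic case.

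Second, the subtle point is that the synchronizing word depends on the RNG realization: for RNG realization $\rho$, there is a synchronizing word $W_\rho$ of length at most $|V|^2$ that drives every state to $\tilde v$. Since there are only finitely many possible RNG outputs per step and the length is bounded by $|V|^2$, there are only finitely many relevant finite RNG-prefixes $\rho$, hence only finitely many candidate synchronizing words $W_\rho$. This is what lets us avoid having to match an infinitely long pattern.

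\begin{proof}
Model a single step of the game as a deterministic function $\delta : V \times \mathcal{B} \times \mathcal{R} \to V$, where $\mathcal{R}$ is the (finite) set of possible RNG outputs at a single step, and a full play is driven by the button stream $b_1 b_2 b_3 \cdots$ read from the base-$b$ expansion of $x$ together with an RNG stream $r_1 r_2 r_3 \cdots$ whose finite prefixes each occur with positive probability by the full-support hypothesis. For any fixed finite RNG string $\rho = r_1 \cdots r_k \in \mathcal{R}^k$, the map sending a button word $u = u_1 \cdots u_k$ and a start state $v$ to the terminal state of the induced path is exactly the transition structure of a deterministic game in the sense of Section~2 (one recovers the earlier model by absorbing $\rho$ into an enlarged state space). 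Hence by Theorem~\ref{graph fact} there is, for each such $\rho$ with $k = |V|^2$, a button word $W_\rho$ of length at most $|V|^2$ that synchronizes every start state to the sink $\tilde v$ under the RNG realization $\rho$.

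Since $\mathcal{R}$ and $|V|^2$ are both finite, the set $\{ W_\rho : \rho \in \mathcal{R}^{|V|^2} \}$ of candidate winning words is finite; enumerate it as $W^{(1)}, \dots, W^{(m)}$, each of length at most $L := |V|^2$. Because $x$ is disjunctive, the concatenation $W^{(1)} W^{(2)} \cdots W^{(m)}$ appears in the base-$b$ expansion of $x$; indeed, disjunctivity guarantees that infinitely many disjoint occurrences of this concatenation appear, so there are infinitely many positions $n_1 < n_2 < \cdots$ in the button stream at which some length-$L$ block equal to each $W^{(j)}$ begins. Now fix one such occurrence starting at position $n$. Conditioned on the event that the RNG produces the particular realization $\rho^\ast$ over the $L$ steps following $n$, the button block read from $x$ is some $W^{(j)}$, and in particular the block $W_{\rho^\ast}$ will occur at one of our aligned positions; when it does, it drives the game to $\tilde v$ regardless of the state at position $n$, and $\tilde v$ is absorbing, so the game is won. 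Each such aligned window is a win with probability at least $p := \min_{\rho} \Pr[\text{RNG outputs } \rho \text{ over } L \text{ steps}] > 0$, a positive constant by full support. Since there are infinitely many disjoint windows and each is an independent (or, more carefully, conditionally lower-bounded) chance to win with probability at least $p$, a Borel--Cantelli argument shows the probability of never winning is at most $\prod (1 - p) = 0$. Hence $x$ wins with probability $1$.
\end{proof}

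The main obstacle is the step where the winning word itself depends on the random realization: one cannot simply demand that a single fixed word appear in $x$, because which word wins is only determined once the RNG outputs over that window are revealed. The fix above is to exploit finiteness of $\mathcal{R}^{L}$ so that only finitely many candidate words are needed, guarantee all of them appear (disjunctively, in fact infinitely often), and then lower-bound the per-window win probability by a positive constant to invoke Borel--Cantelli. Making the independence-or-conditioning bookkeeping fully rigorous — in particular ensuring the RNG draws in disjoint windows can be treated as giving genuinely independent trials, or at least conditionally bounded-below success probabilities — is the one place where care is genuinely required.
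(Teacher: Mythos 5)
There is a genuine gap at the central step of your argument. You claim that for \emph{every} fixed RNG realization $\rho \in \mathcal{R}^{|V|^2}$, Theorem \ref{graph fact} yields a button word $W_\rho$ synchronizing all states to $\tilde{v}$ under $\rho$. This is false in general, for two reasons. First, the hypothesis of Theorem \ref{graph fact} is win-reducibility: from every state there is \emph{some} path to $\tilde{v}$, where the path gets to choose its edges --- in the RNG setting this means choosing both the buttons \emph{and} the RNG outputs. Once you freeze the RNG string to an arbitrary $\rho$, the resulting deterministic system need not be win-reducible at all: if winning requires some random event to go your way (an attack must land, say), then under a $\rho$ in which that event never goes your way, no button word whatsoever reaches $\tilde{v}$, and $W_\rho$ simply does not exist. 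Full support says that favorable RNG strings occur with positive probability; it does not say that every RNG string is favorable, which is what your quantifier order (``for each $\rho$ there exists $W_\rho$'') requires. Second, and less fundamentally, freezing $\rho$ gives a time-inhomogeneous transition system (the transition function changes at each step as $\rho$ is consumed), which is not an automaton of the kind Theorem \ref{graph fact} addresses; ``absorbing $\rho$ into an enlarged state space'' does not rescue this, since the enlarged system is only defined for $|V|^2$ steps and its sink/reachability structure is not the one the theorem assumes.

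The fix is to reverse the quantifiers, and this is exactly what the paper does: apply Theorem \ref{graph fact} once over the product alphabet $\mathcal{B} \times \mathcal{R}$ (equivalently, choose the button word and the RNG string \emph{jointly}), obtaining a single fixed pair $(W, R)$ such that playing $W$ while the RNG outputs $R$ drives every state to $\tilde{v}$. Then $W$ is one fixed finite word; disjunctivity gives infinitely many disjoint occurrences of $W$ in $x$; at each such occurrence the RNG matches $R$ with probability $P(R) > 0$; and independence across disjoint windows gives failure probability $\lim_{n \to \infty} (1 - P(R))^n = 0$. The second half of your proof (finitely many candidate words, a uniform positive per-window success probability, a Borel--Cantelli tail estimate) has the right shape and essentially matches the paper's closing computation, but it cannot be run until the existence of the winning words is established with the correct quantifier order.
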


\begin{proof} Let $W$ be the synchronizing word from Theorem \ref{graph fact} and let $R$ be the required sequence of RNG outputs for which $(W, R)$ is guaranteed to win. Since the RNG has full support with respect to all finite words, the probability $P(R) > 0$, and since $x$ is disjunctive we know $W$ occurs infinitely often. Thus, for each occurrence of $W$, there is probability $(1-P(R)) < 1$ that $x$ will not win the game at that point. Since $W$ occurs infinitely often, we know in the limit the probability that $x$ will not win is $\lim_{n \rightarrow \infty} (1-P(R))^n = 0$. Thus, with probability $1$, $x$ will win the game. 
\end{proof}

\section{Concluding Remarks}

We have shown that for almost-every real number $x$, the base-$b$ expansion of $x$ contains a sequence of button pushes that will win any deterministic, win-reducible game. Because a disjunctive number contains every finite string, it will also necessarily contain the sequence of inputs for every possible Tool-Assisted Speedrun (TAS), including the frame-perfect optimal playthrough (assuming the buttons are pushed at the appropriate speed). 

However, as shown by the bound in Observation 3.1, this result is purely existential. The length of the guaranteed winning sequence for a game as complex as Pok\'emon Sapphire is so vast that it exceeds the number of Planck volumes in the observable universe by many orders of magnitude. While a disjunctive number $x$ will win eventually, it is almost certain that the physical hardware running the game would succumb to the heat death of the universe long before the winning digit is reached. Thus, while $\pi$ likely contains a winning playthrough of Pok\'emon, it is sadly quite possible that the universe does not contain enough time for us to witness it.

%
%
%
%
%
%
%
%
%
%
%
%
%

%
%
%
%

\bibliographystyle{plain}
\bibliography{mybib}

\end{document}